\newtheorem{thm}{Theorem}[section] 
\newtheorem{lem}[thm]{Lemma} 
\newtheorem{cor}[thm]{Corollary} 
\theoremstyle{definition}
\newtheorem{exa}[thm]{Example} 
\newtheorem{de}[thm]{Definition}
\numberwithin{equation}{section} 
\newcommand{\ab}[1]{{\mathbf{#1}}}
\newcommand{\N}{\Bbb{ N}} 
\newcommand{\Z}{\Bbb{ Z}} 
\newcommand{\setsuchthat}{\,\, \pmb{|} \,\,}
\newcommand{\vb}[1]{#1}
\newcommand{\Pol}{\mathrm{Pol}}
\newcommand{\Clo}{\mathrm{Clo}}
\newcommand{\Con}{\mathrm{Con}\,}
\newcommand{\A}{\ab{A}}
\newcommand{\B}{\ab{B}}
\newcommand{\C}{\ab{C}}
\newcommand{\D}{\ab{D}}
\newcommand{\epsi}{\varepsilon}
\title{Independence of algebras with edge term}
\author{Erhard Aichinger and Peter Mayr}
\subjclass[2010]{ 08A40 (08B20, 03C05)}
\keywords{direct products, term operations, polynomial functions, congruence permutable varieties, edge terms}
\thanks{Supported by the Austrian Science Fund (FWF): P24077 and P24285}
\dedicatory{Dedicated to our teacher G\"unter Pilz on the occasion of his 70th birthday}
\date{March 19, 2015}
\begin{document}
\begin{abstract}
In \cite{Fo:TIOA}, two varieties $V, W$ of the same type are defined
to be \emph{independent} if there is a binary term $t(x,y)$ such that
$V \models t(x,y) \approx x$ and $W \models t(x,y) \approx y$.
In this note, we give necessary and sufficient conditions for two finite algebras with
a Mal'cev term (or, more generally, with an edge term) to
generate independent varieties. 
 In particular we show that the independence of finitely generated varieties with edge term can be
 decided by a polynomial time algorithm.
\end{abstract} 

\maketitle
\section{Independent varieties and algebras}
    In this note, we search for conditions on two varieties
    of the same type to be independent. 
    This notion of independence was introduced in \cite{Fo:TIOA}. Foster
    calls a finite sequence $(V_i)_{i \in \{1, \ldots, n\}}$ of subvarieties of a variety $W$
     \emph{independent}
    if there exists a term $t(x_1, \ldots, x_n)$ such that for each $i \in \{1,\ldots, n\}$, 
     $V_i \models t (x_1, \ldots, x_n) \approx x_i$ \cite[Lemma~2.1]{Fo:TIOA}.
      Gr\"atzer, Lakser, and P\l onka proved that for two independent varieties
     $V_1$ and $V_2$, every algebra in the join $V_1 \vee V_2$ is isomorphic to
     a direct product $\ab{A}_1 \times \ab{A}_2$ with $\ab{A}_1 \in V_1$ and 
     $\ab{A}_2 \in V_2$. It is easy to see that two independent varieties
     $V_1$ and $V_2$ are \emph{disjoint}, meaning that $V_1 \cap V_2$ only contains
     one element algebras. If $V_1$ and $V_2$ are subvarieties of
     a congruence permutable variety, then the converse holds: in fact,
     Hu and Kelenson proved that a sequence $(V_1, \ldots, V_n)$ of subvarieties
     of a congruence permutable variety is independent if and only if $V_i$ and $V_j$ are disjoint
     for all distinct $i,j$
     \cite[Corollary~2.9]{HK:IADF}. Freese and McKenzie showed 
      that if $V_1$ and $V_2$ are disjoint subvarieties of a congruence modular
      such that at least one of the varieties is solvable, then $V_1$ and $V_2$ are independent
      \cite[Theorem~11.3]{FM:CTFC}.
       J\'onsson and Tsinakis proved that the join of two independent finitely
      based varieties
      of finite type is again finitely based \cite[Theorem~3.3]{JT:POCO}; a different finite axiomatization
      of the join is given in \cite[Theorem~3.9]{KPL:OIVA}. In this paper,
       Kowalski, Paoli, and Ledda also gave a characterization
       of independence for disjoint varieties by a Mal'cev-type condition \cite[Theorem~3.2]{KPL:OIVA}. 

       Two algebras $\ab{A}$ and $\ab{B}$ from the same variety are called \emph{independent}
       if they generate independent varieties; this is equivalent to the existence
       of a binary term $t(x,y)$ such that $\ab{A} \models t(x,y) \approx x$ and
       $\ab{B} \models t(x,y) \approx y$. 
      Let $V = V (\ab{A})$, the variety generated by $\ab{A}$, let 
    $W = V (\ab{B})$, and
    let $\ab{F}_V (2)$ and $\ab{F}_W (2)$ be the free algebras in $V$ and $W$ over $2$ generators.
    It is not too hard to see (and will be proved in Lemma~\ref{lem:indep1}) that the following
    condition is equivalent to the independence of $V$ and $W$:
    \begin{quote}
        $\ab{F}_V (2) \times \ab{F}_W (2)$ is the only subdirect product
        of $\ab{F}_V (2) \times \ab{F}_W (2)$.
    \end{quote}
    Hence the independence of $V$ and $W$ can be determined from the subuniverses
    of $\ab{A}^{A^2} \times \ab{B}^{B^2}$. In this note, we will see that for
    finite algebras $\A$ and $\B$, the independence of $V (\A)$ and $V(\B)$ can
    be determined from the subuniverses of $\A^2 \times \B^2$ if $\A$ and $\B$ have
    a common Mal'cev term, and from the subuniverses of $\A^{k-1} \times \B^{k-1}$ if
    $\A$ and $\B$ have a common $k$-edge term with $k \ge 3$.
 From this we obtain a polynomial time algorithm for deciding the independence of two
 finite algebras of finite type with edge term.
     As another application, we obtain a new proof of the description of
     polynomial functions on direct products without skew-congruences from \cite{KM:PFOS}.

\section{Product subalgebras} \label{sec:pro}

In this section, we will describe the shape of subuniverses
of direct products of powers of two algebras.
For a direct product $\ab{E} \times \ab{F}$, we define
$\pi_{\ab{E}} ( e,f ) = e$ and $\pi_{\ab{F}} ( e,f ) = f$ for 
all $e \in E$, $f \in F$.
\begin{de} 
   Let $\ab{E}$ and $\ab{F}$ be two similar algebras.
   We call a subalgebra $\ab{C}$ of $\ab{E} \times \ab{F}$
   a \emph{product subalgebra} if $\ab{C} = \pi_{\ab{E}} (\ab{C}) \times 
      \pi_{\ab{F}} (\ab{C})$.
\end{de}
Hence $\ab{C}$ is a product subalgebra of $\ab{E} \times \ab{F}$ if and only if
for all $(e_1, f_1) \in C$ and $(e_2, f_2) \in C$, we 
have $(e_1, f_2) \in C$. 
%We notice that the notion
%``rectangular'' has a different meaning in 
%\cite{BD:TADT}. 
We note that in this paper, the concept 
of product subalgebras only refers to subalgebras of
direct products of \emph{two} algebras. If we say
that for similar algebras $\ab{A}$ and $\ab{B}$ and
for $m, n \in \N$,
a subalgebra $\ab{C}$ of $\ab{A}^m \times \ab{B}^n$ is 
a product subalgebra, we mean that $(\vb{a}, \vb{b}) \in C$ and
$(\vb{c}, \vb{d}) \in C$ implies $(\vb{a}, \vb{d}) \in C$ for
all $\vb{a},\vb{c} \in A^m$ and $\vb{b},\vb{d} \in B^n$.
We recall that a \emph{tolerance relation} of an algebra
$\ab{A}$
is a subalgebra of $\ab{A} \times \ab{A}$ that
is a reflexive and symmetric relation on $A$. 
\begin{de}
    Let $\ab{A}$ and $\ab{B}$ be similar algebras,
    let $\alpha$ be a subset of $A \times A$, and
     let $\beta$ be a subset of $B \times B$.
    Then the \emph{product} $\alpha \times_c \beta$ is defined by
    \[
       \alpha \times_c \beta =
         \{ ((a_1, b_1), (a_2, b_2)) \in (A \times B) \times (A \times B) \setsuchthat
              (a_1, a_2) \in \alpha, (b_1, b_2) \in \beta \}.
    \]
%    A binary relation $\gamma$ on $A \times B$ is a \emph{product relation}
%    if there are $\alpha \subseteq A \times A$ and $\beta \subseteq B \times B$
%    such that $\gamma = \alpha \times_c \beta$.
 %   
A \emph{product tolerance} of the direct product $\ab{A} \times \ab{B}$ is
a tolerance $\gamma$  on $\ab{A}$ such that
$\gamma = \alpha \times_c \beta$ for some tolerances $\alpha$ of $\ab{A}$ and 
$\beta$ of $\ab{B}$; and $\gamma$ is  \emph{product congruence} of $\ab{A} \times \ab{B}$
if there are $\alpha \in \Con (\ab{A})$ and $\beta \in \Con (\ab{B})$ such that
$\gamma = \alpha \times_c \beta$.
\end{de}
Our main results are the following two theorems.
\begin{thm} \label{thm:1}
   Let $\ab{A}, \ab{B}$ be algebras in a congruence
   permutable variety. We assume that
   \begin{enumerate}
       \item \label{it:ass1} all subalgebras of $\ab{A} \times \ab{B}$ are product subalgebras, and
       \item \label{it:ass2} for all subalgebras $\ab{E}$ of $\ab{A}$ and $\ab{F}$ of $\ab{B}$,
             all congruences of  $\ab{E} \times \ab{F}$ are product congruences.
   \end{enumerate}
   Then for all $m,n \in \N_0$,  all subalgebras of $\ab{A}^m \times \ab{B}^n$ are product subalgebras.
\end{thm}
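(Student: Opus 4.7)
The plan is to argue by induction on $m+n$. The base case $m=n=1$ is precisely assumption~\eqref{it:ass1}, and cases with $m=0$ or $n=0$ are vacuous. In order to close the induction, I would strengthen the claim and prove by simultaneous induction on $m+n$ both
\begin{enumerate}
\item[(a)$_{m,n}$] every subalgebra of $\ab{A}^m \times \ab{B}^n$ is a product subalgebra, and
\item[(b)$_{m,n}$] for all subalgebras $\ab{E}\le \ab{A}^m$ and $\ab{F}\le \ab{B}^n$, every congruence of $\ab{E}\times \ab{F}$ is a product congruence.
\end{enumerate}
Then (a)$_{1,1}$ and (b)$_{1,1}$ are exactly assumptions~\eqref{it:ass1} and~\eqref{it:ass2}, and the conclusion of the theorem is the family (a)$_{m,n}$.

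For the inductive step of (a)$_{m,n}$, assume $n\ge 2$ by symmetry, let $\ab{C}\le \ab{A}^m\times \ab{B}^n$, and set $\ab{E}:=\pi_{\ab{A}^m}(\ab{C})$, $\ab{F}:=\pi_{\ab{B}^n}(\ab{C})$, so that $\ab{C}$ is subdirect in $\ab{E}\times \ab{F}$. Fleischer's description of subdirect products in a congruence permutable variety pairs such a subdirect $\ab{C}$ with congruences $\alpha\in \Con \ab{E}$, $\beta\in \Con \ab{F}$ and an isomorphism $\ab{E}/\alpha\cong \ab{F}/\beta =:\ab{G}$, with $\ab{C}=\ab{E}\times \ab{F}$ iff $\ab{G}$ is trivial. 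Thus (a)$_{m,n}$ is equivalent to the assertion that no pair $\ab{E}\le \ab{A}^m$, $\ab{F}\le \ab{B}^n$ admits a nontrivial common quotient. Supposing such $\ab{G}$ exists, decompose $\ab{F}$ subdirectly in $\ab{F}_{<n}\times \ab{F}_n\le \ab{B}^{n-1}\times \ab{B}$ with projection kernels $\theta_{<n},\theta_n\in \Con \ab{F}$ satisfying $\theta_{<n}\meet\theta_n=\Delta_F$. Then $\ab{F}/(\beta\join \theta_{<n})$ is a common quotient of $\ab{E}\le \ab{A}^m$ (via $\ab{G}$) and of $\ab{F}_{<n}=\ab{F}/\theta_{<n}\le \ab{B}^{n-1}$; by (a)$_{m,n-1}$ read through Fleischer, this quotient must be trivial, so $\beta\join \theta_{<n}=\nabla_F$. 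The same reasoning with $\theta_n$ yields $\beta\join \theta_n=\nabla_F$.

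The main obstacle is now to deduce $\beta=\nabla_F$ from these two ``large-join'' identities and $\theta_{<n}\meet\theta_n=\Delta_F$. This is not a consequence of modularity of the congruence lattice alone: the diagonal subgroup of $\mathbb{Z}_2\times \mathbb{Z}_2$ already shows that a nontrivial $\beta$ can join with two complementary congruences to the top. Here the (b)-side of the strengthened induction hypothesis is essential; it controls the congruence structure of $\ab{F}$, viewed through its subdirect decomposition and paired against $\ab{E}$, tightly enough to rule out a skew $\beta$ via a Mal'cev/commutator-theoretic manipulation. The propagation of (b)$_{m,n}$ itself in the inductive step runs in parallel: one analyzes an arbitrary congruence of $\ab{E}\times \ab{F}$ through the decomposition $\ab{B}^n=\ab{B}^{n-1}\times \ab{B}$ and invokes (a) and (b) at smaller indices to force any putative skew component to collapse. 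I expect the coordination of the (a) and (b) inductions, and the identification of the precise commutator-theoretic input that converts ``$\beta\join \theta_{<n}=\beta\join\theta_n=\nabla_F$ with $\theta_{<n}\meet\theta_n=\Delta_F$'' into ``$\beta=\nabla_F$'' using (b), to be the main technical hurdle.
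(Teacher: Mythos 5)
Your reduction via Fleischer's lemma is sound as far as it goes: (a)$_{m,n}$ is indeed equivalent to the nonexistence of a nontrivial common quotient of some $\ab{E}\le\ab{A}^m$ and $\ab{F}\le\ab{B}^n$, and the identities $\beta\join\theta_{<n}=\beta\join\theta_n=\nabla_F$, $\theta_{<n}\meet\theta_n=\Delta_F$ do follow from the induction hypothesis. But the proof stops exactly where it needs to start. You correctly observe (via the diagonal of $\mathbb{Z}_2\times\mathbb{Z}_2$) that these lattice identities do not force $\beta=\nabla_F$, and then you defer the actual argument to an unspecified ``Mal'cev/commutator-theoretic manipulation'' using (b). No such manipulation is exhibited, and it is not clear one exists along these lines: assumption~\eqref{it:ass2} (and your (b)$_{m,n}$) constrains congruences of products $\ab{E}\times\ab{F}$ \emph{across} the $\ab{A}$/$\ab{B}$ divide, whereas your obstruction lives entirely inside $\Con\ab{F}$ for $\ab{F}\le\ab{B}^n$; nothing in the hypotheses rules out a skew $\beta$ there (indeed $\ab{B}=\mathbb{Z}_2\times\mathbb{Z}_2$ realizes it, and the theorem then has to be saved by the $\ab{A}$-side, which your congruence-lattice bookkeeping never sees). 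The parallel claim that (b)$_{m,n}$ propagates under the induction is likewise asserted, not proved; note that the natural derivation of (b)$_{m,n}$ from product-subalgebra statements uses (a) at index $(2m,2n)$, which is not available in an induction on $m+n$. So there are two genuine gaps, and the first one is the heart of the theorem.

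For comparison, the paper's proof avoids the congruence-lattice route entirely and works with elements. Given a target tuple in $\pi_{A^m}(\ab{C})\times\pi_{B^n}(\ab{C})$, the induction hypothesis (applied to the two projections dropping the last $A$- and the last $B$-coordinate) produces two tuples of $C$ agreeing with the target except in the last coordinate of one block each. One then forms the ``fork'' relation $\alpha$ on $S:=\pi_{\{m,m+n\}}(C)\le\ab{A}\times\ab{B}$, consisting of pairs of last-coordinate pairs witnessed by elements of $C$ agreeing elsewhere; by the Mal'cev term $\alpha$ is a congruence of $\ab{S}$, and assumption~\eqref{it:ass2} --- invoked only for subalgebras of $\ab{A}\times\ab{B}$, so no strengthening of the induction hypothesis is ever needed --- splits $\alpha$ as $\alpha_1\times_c\alpha_2$. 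This yields a third element of $C$, and one application of the Mal'cev term to the three tuples puts the target into $C$. If you want to complete your own approach you must supply the step from the join/meet identities to $\beta=\nabla_F$ using the $\ab{A}$-side data; as it stands, that step is missing and is not a routine consequence of congruence permutability.
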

The proof is given in Section~\ref{sec:proofthm1}. 
We will generalize this result from congruence permutable varieties
to varieties with an edge term \cite{BI:VWFS}.
  Let $k\in\N, k\geq 2$. A $(k+1)$-ary term $t$ in the language of a variety $V$ is a 
 $k$-\emph{edge term} if
\[ t\left(\begin{array}{cccccc} 
   x&x&y&\dots&\dots&y \\
   x&y&x&\ddots&&\vdots \\
   y&y&y&\ddots &\ddots&\vdots \\
%   \vdots &\vdots & \vdots &\ddots &\ddots &\ddots &\vdots  \\
   \vdots &\vdots & \vdots & \ddots & \ddots & y \\
   y&y&y&\dots&y&x \end{array}\right) \approx
\left(\begin{array}{c}
 y \\
 \vdots \\
 \vdots \\
 \vdots \\
 y \end{array}\right). \]
%  For $k\geq 2$, a term $t(x_1,\dots,x_{k+1})$ is a $k$-\emph{edge term} for a variety $V$ if for all
%  $\A\in V$ we have
%  \[ \A\models t(y,y,x,\ldots,x) \approx t(y,x,y,x,\ldots,x) \approx y \]
%  and for all $i \in \{4,\ldots, k+1\}$ we have
% \[ \A\models t(x, \ldots,x, y, x, \ldots, x) \approx x \text{ with } y \text{ in position } i. \]
 We note that a variety has an $2$-edge term if and only if it has a Mal'cev term.
 Every variety with a Mal'cev term or with a near-unanimity term has an edge term.
 The class of algebras with an edge term therefore
 contains all groups and their expansions (such as rings, vector spaces, Lie algebras \ldots),
 all quasigroups, loops, as well as all lattices and their expansions. 

\begin{thm} \label{thm:p}
  Let $k \ge 2$,
  let $\A,\B$ be algebras in a variety with $k$-edge term.
  We assume that
\begin{enumerate}
\item \label{it:small}
 for all $r,s\in\N$ with $r+s\leq\max(2,k-1)$, every subalgebra of $\A^r\times\B^s$
 is a product subalgebra, and
\item \label{it:tol}
 for all subalgebras $\ab{E}$ of $\A$ and $\ab{F}$ of $ \B$, every tolerance of $\ab{E}\times\ab{F}$ is a product tolerance.
\end{enumerate}
 Then for all $m, n \in \N_0$, every subalgebra of $\A^m\times\B^n$ is a product subalgebra.
\end{thm}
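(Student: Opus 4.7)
The plan is to induct on $m+n$. The base case $m+n\leq\max(2,k-1)$ is immediate from hypothesis~(\ref{it:small}). For the inductive step, suppose $m+n>\max(2,k-1)$ and the conclusion holds for all $(m',n')$ with $m'+n'<m+n$. Let $\ab{C}\leq\A^m\times\B^n$; it suffices to show that for any $(\vb{a},\vb{b}),(\vb{a}',\vb{b}')\in C$, also $(\vb{a},\vb{b}')\in C$.

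By the inductive hypothesis, every projection of $\ab{C}$ onto a proper subset of its coordinates is a product subalgebra of the corresponding smaller power of $\A$ and $\B$. This yields a rich supply of auxiliary elements of $\ab{C}$ whose coordinates can be prescribed on arbitrary proper subsets (with the remaining coordinate then existing but unconstrained); these auxiliary tuples are the raw material fed into the $k$-edge term.

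The core step is to find $k+1$ elements $v_{0},v_{1},\dots,v_{k}\in C$ such that $t(v_{0},v_{1},\dots,v_{k})=(\vb{a},\vb{b}')$. The $k$ identities defining a $k$-edge term are tailored so that at any coordinate of $\A^m\times\B^n$ where the $(k+1)$-tuple of entries of $v_{0},\dots,v_{k}$ matches one of the $k$ patterns, the value of $t$ at that coordinate is forced to equal the corresponding ``$y$'' entry; the construction arranges for ``$y$'' to be the desired entry of $(\vb{a},\vb{b}')$ at each coordinate. For $m+n$ only slightly larger than $k-1$, a direct choice from the auxiliary tuples supplied by the inductive hypothesis should suffice. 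For larger $m+n$, one iterates: each application of $t$ is designed to produce a new element of $\ab{C}$ that matches $(\vb{a},\vb{b}')$ on strictly more coordinates than any of the inputs.

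Hypothesis~(\ref{it:tol}) enters precisely at the iteration step to decouple the $\A$- and $\B$-sides. When combining a fresh auxiliary tuple with the current approximation, one needs independent control of the $\A$- and $\B$-coordinate adjustments being performed by $t$. The relevant ``bridging'' relation on $\ab{E}\times\ab{F}$, for subalgebras $\ab{E}\leq\A$ and $\ab{F}\leq\B$ determined by the current approximation, is a reflexive, symmetric subalgebra of $(\ab{E}\times\ab{F})^{2}$ and hence a tolerance. Hypothesis~(\ref{it:tol}) then guarantees that this tolerance splits as $\tau_{\ab{E}}\times_{c}\tau_{\ab{F}}$, which provides exactly the required independence of $\A$- and $\B$-side adjustments. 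The main obstacle I anticipate is the combinatorial orchestration: choreographing the $k$ edge-term patterns across all $m+n$ coordinates over multiple iterations while preserving the tolerance-based decoupling. It is in this coordination that both hypotheses are needed simultaneously --- without the tolerance hypothesis, the $\A$- and $\B$-side discrepancies could become entangled in a way that blocks further progress of the iteration.
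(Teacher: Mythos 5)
Your skeleton --- induction on $m+n$, projections onto fewer coordinates supplying auxiliary tuples, and an iterative application of the edge term that increases agreement with the target $(\vb{a},\vb{b}')$ --- matches the paper's strategy, and nothing you say is wrong in spirit. But the step you yourself flag as ``the main obstacle,'' the combinatorial orchestration of the edge term, is exactly where the content of the proof lies, and you have not supplied it. A single application of the $k$-edge term to tuples that each agree with the target on some coordinates will in general destroy agreement on other coordinates; the assertion that ``each application of $t$ produces a new element matching on strictly more coordinates than any of the inputs'' is precisely the claim that needs a construction, and it does not follow from the edge identities in any direct way. The paper resolves this by (i) replacing the $k$-edge term with an equivalent $(1,k-1)$-parallelogram term (Kearnes--Szendrei), and (ii) proving a representation lemma (Lemma~\ref{le:rep}) by a double induction: an outer induction on the number of initial coordinates matched, and an inner induction on subsets $T$ of the already-matched coordinates, in which the parallelogram term is applied to $g,g',f',g,f^{U_1},\dots,f^{U_{k-1}}$ with $U_j=T\setminus\{i_j\}$, so that each of the $k-1$ trailing slots repairs one coordinate of $T$ while a witnessing pair $g',f'$ for the fork at the new coordinate fixes that coordinate. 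Without this device (or an equivalent one) your induction step is a statement of intent rather than a proof.

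Second, your placement of hypothesis~(\ref{it:tol}) is slightly off. It is not needed repeatedly ``at the iteration step'': the iteration itself uses only the parallelogram term. The tolerance hypothesis is used exactly once, before the iteration starts, to show that every fork of $D:=\pi_{A^m}(C)\times\pi_{B^n}(C)$ at the last coordinate is already a fork of $C$. One forms the tolerance $\gamma$ on $\pi_{\{1,m+n\}}(\C)$ consisting of the pairs $((f_1,f_{m+n}),(g_1,g_{m+n}))$ with $f,g\in C$ agreeing on coordinates $2,\dots,m+n-1$, notes that $\pi_{\{1,m+n\}}(\C)=\ab{E}\times\ab{F}$ by hypothesis~(\ref{it:small}) with $r=s=1$, and uses the splitting $\gamma=\alpha\times_c\beta$ to replace the first coordinate of one witness by that of the other. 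Your ``bridging relation'' is the right object, but you need to specify it and verify that its underlying algebra really has the form $\ab{E}\times\ab{F}$, since hypothesis~(\ref{it:tol}) only speaks about tolerances of such products.
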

The proof is given in Section~\ref{sec:edge}.
In an algebra with a Mal'cev term, all tolerances are congruences.
Hence Theorem~\ref{thm:1} is really a special case of Theorem~\ref{thm:p}.

 In Theorem \ref{thm:indcp}, \ref{thm:indedge}, respectively, we show that finite algebras
 $\ab{A}, \ab{B}$ that satisfy the assumptions of Theorem~\ref{thm:1}, \ref{thm:p}, respectively,
 are actually independent. In Example~\ref{exa:pruefer} we provide examples that show that
 in general independence does not follow for infinite $\ab{A}$ and $\ab{B}$.

\section{Mal'cev algebras} \label{sec:proofthm1}

In this section, we give a proof of Theorem~\ref{thm:1}. 
From a logical point of view, this section could be omitted because
Theorem~\ref{thm:1} is a corollary of Theorem~\ref{thm:p}.
However, we think it is instructive to see the ideas
of the proof first in this case.

\emph{Proof of Theorem~\ref{thm:1}:}
 %  We fix $\ab{A}, \ab{B}$.
 Let $\ab{A}, \ab{B}$ satisfy the assumptions of Theorem~\ref{thm:1}.
       We will prove the claim by showing that for 
       all $m, n \in \N_0$ and 
   for every subalgebra $\ab{C}$ of $\ab{A}^m \times \ab{B}^n$, we have 
    \begin{equation} \label{eq:ind}
        \ab{C} = \pi_{A^m} (\ab{C}) \times \pi_{B^n} (\ab{C}).
    \end{equation}
   We will proceed by induction on $n + m$.
   
   For the induction base, we set $m := 0, n := 0$. The only subalgebra
    of $\ab{A}^0 \times \ab{B}^0$ is clearly a product subalgebra.

% and let $\ab{C}$
%%   be a subalgebra of $\ab{A} \times \ab{B}$. Then 
%   by the assumptions, there are subalgebras $\ab{E}$, $\ab{F}$ of
%    $\ab{A}$ and $\ab{B}$, respectively, such that
%   $\ab{C} = \ab{E} \times \ab{F}$. Since $\pi_{A} (\ab{C}) = \ab{E}$
%   and $\pi_{B} (\ab{C}) = \ab{F}$, we have 
%     $\vb{C} = \pi_{A} (\vb{C}) \times \pi_{B} (\vb{C})$.

   For the induction step, we let $n, m \in \N_0$ be such that $n+m \ge 1$.
   In the case that $m = 0$ or $n=0$, the equality~\eqref{eq:ind}
   clearly holds. Now we assume $n \ge 1$ and $m \ge 1$,
   and we let $\ab{C}$ be a subalgebra of $\ab{A}^m \times \ab{B}^n$.
   We define $\sigma : A^{m} \times B^n \to A^{m-1} \times B^n$ by
   \[
       \sigma ( (a_1,\ldots, a_m), (b_1, \ldots, b_n)) =
               ( (a_1, \ldots, a_{m-1}), (b_1, \ldots, b_n) ),
  \]
  and $\tau : A^m \times B^n \to A^m \times B^{n-1}$ by
  \[
      \tau ( (a_1, \ldots, a_m), (b_1,\ldots, b_n)) =
             ((a_1, \ldots, a_m), (b_1,\ldots, b_{n-1}))
  \]
  for all $a_1,\ldots, a_m \in A, b_1, \ldots, b_n \in B$.

  We are now ready to prove the non-trivial inclusion  $\supseteq$ of 
  \eqref{eq:ind}. To this end,  let $((a_1,\ldots, a_m), (b_1, \ldots, b_n))
   \in \pi_{A^m} (\ab{C}) \times \pi_{B^n} (\ab{C})$.
  Then
   \(  ( (a_1, \ldots, a_{m-1}), (b_1, \ldots, b_{n}) ) \) is an 
   element of $\pi_{A^{m-1}} (\sigma (\ab{C})) \times \pi_{B^{n}} (\sigma (\ab{C}))$.
   Therefore, by the induction hypothesis, we have
   $( (a_1, \ldots, a_{m-1}), (b_1, \ldots, b_{n}) ) \in \sigma (\ab{C})$.
   Thus,
   there is $c \in A$ such that
   \begin{equation} \label{eq:a}
       ( (a_1,\ldots, a_{m-1}, c), (b_1, \ldots b_{n}) ) \in C.
   \end{equation}
    Furthermore, we have
     \[ 
    ( (a_1, \ldots, a_m), (b_1, \ldots, b_{n-1} )) \in
       \pi_{A^m} (\tau (\ab{C})) \times \pi_{B^{n-1}} (\tau (\ab{C})).
   \]
   Therefore, by the induction hypothesis, we have
   $( (a_1, \ldots, a_m), (b_1, \ldots, b_{n-1}) ) \in \tau (\ab{C})$.
   Hence there is $d \in B$ such that
   \begin{equation} \label{eq:b}
       ( (a_1, \ldots, a_m), (b_1, \ldots, b_{n-1}, d) ) \in C.
   \end{equation}

   Next, we define a subset $\alpha$ of $(A \times B)^2$ by
   \begin{multline}
      \alpha := \{ ((x_m, y_n), (x'_m, y'_n)) \setsuchthat \exists 
                   (x_1, \ldots, x_{m-1}) \in A^{m-1}, (y_1, \ldots, y_{n-1}) \in B^{n-1} :  \\
                  ((x_1,\ldots, x_{m-1}, x_m),  (y_1, \ldots, y_{n-1}, y_n)) \in C \text{ and } \\
                  ((x_1,\ldots, x_{m-1}, x'_m), (y_1, \ldots, y_{n-1}, y'_n)) \in C \}.
   \end{multline}
   It is easy to see that $\alpha$ is a reflexive relation
   on $$S := \{ (x_m, y_n) \setsuchthat ((x_1, \ldots, x_m), (y_1, \ldots, y_n)) \in C \}.$$ Furthermore,
   $S$ is a subuniverse of $\ab{A} \times \ab{B}$,
   $\alpha$ is a subuniverse of $(\ab{A} \times \ab{B})^2$, and $\{ (s,s) \setsuchthat s \in S \} \subseteq \alpha \subseteq S^2$.
   Since $\ab{S}$ has a Mal'cev term, this implies that $\alpha \in \Con (\ab{S})$.

   From~\eqref{eq:a} and~\eqref{eq:b}, we obtain $((c, b_n), (a_m, d)) \in \alpha$.
   Since $\ab{S}$ is a product subalgebra of $\ab{A} \times \ab{B}$, we obtain
   $(c,d) \in S$. We will prove next that
   \begin{equation} \label{eq:alpha}
         ((c, b_n), (c, d)) \in \alpha.
   \end{equation}
   All congruences of $\ab{S}$ are product congruences, and therefore,
   there are congruences $\alpha_1 \in \Con (\pi_{\ab{A}} (\ab{S}))$ and
   $\alpha_2 \in \Con (\pi_{\ab{B}} (\ab{S}))$ such that
   $\alpha = \alpha_1 \times_c \alpha_2$.
 Then $((c, b_n), (a_m, d)) \in \alpha$ yields $(b_n, d) \in \alpha_2$.
 Together with $(c,c) \in \alpha_1$, this implies~\eqref{eq:alpha}.  
   
 Hence we have $u_1, \ldots, u_{m-1} \in A$, $v_1, \ldots, v_{n-1} \in B$ such
   that
   \[
      \begin{array}{rcl}
            ((u_1,\ldots, u_{m-1}, c), (v_1, \ldots, v_{n-1}, b_n)) & \in & C, \\
            ((u_1,\ldots, u_{m-1}, c), (v_1, \ldots, v_{n-1}, d)) & \in & C, \\
((a_1,\ldots, a_{m-1}, a_m), (b_1, \ldots, b_{n-1}, d)) & \in & C.
      \end{array}
   \]
 The last line above is equation~\eqref{eq:b}.
%   \[ 
%           ((a_1,\ldots, a_{m-1}, a_m), (b_1, \ldots, b_{n-1}, d)) \in C.
%   \]
   Applying the Mal'cev term to these $3$ lines, we obtain
 $$((a_1,\ldots, a_m), (b_1, \ldots, b_n)) \in C.$$
   This completes the proof of~\eqref{eq:ind}, and hence the induction step. \qed  

\section{Algebras with edge term} \label{sec:edge}
In this section, we will prove Theorem~\ref{thm:p}. To this end,
we need some preparation about algebras with edge term. 
 Let $k\in\N, k\geq 2$. A $(k+3)$-ary term $p$ in the language of a variety $V$ is a 
 $(1,k-1)$-parallelogram term if
\[ p\left(\begin{array}{cccccccc} 
   x&x&y&z&y&\dots&\dots&y \\
   y&x&x&y&\ddots&\ddots&&\vdots \\
   \vdots &\vdots &\vdots & \vdots &\ddots &\ddots &\ddots &\vdots  \\
   \vdots &\vdots &\vdots & \vdots & &\ddots & \ddots & y \\
   y&x&x&y&\dots&\dots&y&z \end{array}\right) \approx
\left(\begin{array}{c}
 y \\
 \vdots \\
 \vdots \\
 \vdots \\
 y \end{array}\right). \]
 A variety has a $k$-edge term iff it has a $(1,k-1)$-parallelogram
 term~\cite[Theorem 3.5]{KS:COAW}. 
 
 We give a slight generalization of representations for subpowers of algebras with edge
 terms from~\cite{BI:VWFS} to subalgebras of direct products.
 For $n\in\N$ and sets $A_1,\dots, A_n$, let $R \subseteq A_1\times\dots\times A_n$.
 For $i \in \{1,\ldots, n\}$, we define the relation $\varphi_i (R)$ on $A_i$ by
\begin{multline*}
    \varphi_i (R) := \{ (a_i, b_i) \in A_i \times A_i  \setsuchthat \\
                        (a_1,\ldots, a_n) \in R,
                        (b_1,\ldots, b_n) \in R,
                        (a_1, \ldots, a_{i-1}) =
                        (b_1, \ldots, b_{i-1}) \}.
\end{multline*}
 An element of $\varphi_i (R)$ is also called a \emph{fork} of $R$ at index $i$.
 If tuples $\vb{a} := (a_1,\ldots, a_n)$ and $\vb{b} := (b_1,\ldots, b_n)$ from $R$ satisfy 
 $(a_1, \ldots, a_{i-1}) = (b_1, \ldots, b_{i-1})$, then we say that $\vb{a},\vb{b}$ \emph{witness}
 the fork $(a_i,b_i)$ at index $i$ in $R$.

 For a tuple $\vb{a} := (a_1,\ldots, a_n)$ and $T\subseteq\{1,\dots,n\}$, let
 $\pi_T(a) := (a_i)_{i\in T}$. 
\begin{de}
 Let $k,n\in\N, k\geq 2$, let $\A_1,\dots,\A_n$ be algebras in a variety with $k$-edge term,
 and let $B\leq \A_1\times\dots\times\A_n$. Then $R\subseteq B$ is a \emph{representation}
 of $\B$ if
\begin{enumerate}
\item $\pi_T(R) = \pi_T(B)$ for all $T\subseteq\{1,\dots,n\}$ with $|T|<k$, and
\item $\varphi_i (R) = \varphi_i (B)$ for all $i \in \{1,\ldots, n\}$.
\end{enumerate}
\end{de}

 The present definition of a representation $R$ differs from the original
 notion~\cite[Definition 3.2]{BI:VWFS} in that it applies to products of algebras not only
 to powers of a single algebra. More importantly, we require witnesses for all forks to be in
 $R$ whereas a representation in the sense of~\cite{BI:VWFS} only needs to contain witnesses for
 forks associated with minority indices. 

\begin{lem}  \label{le:rep}
 Let $n\in\N$, let $\A_1,\dots,\A_n$ be algebras in a variety with $k$-edge term.
 Let $\B$ be a subalgebra of $\A_1\times\dots\times\A_n$ with representation $R$.
 Then $R$ generates~$\B$.
\end{lem}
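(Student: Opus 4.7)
\emph{Proof proposal.} The inclusion $\langle R\rangle\subseteq B$ is immediate from $R\subseteq B$. To prove the reverse, I fix $\vb{b}=(b_1,\ldots,b_n)\in B$ and show $\vb{b}\in\langle R\rangle$. The strategy is to construct, by induction on $i\in\{0,1,\ldots,n\}$, a tuple $\vb{r}^{(i)}\in\langle R\rangle$ with $r^{(i)}_j=b_j$ for every $j\le i$; then $\vb{r}^{(n)}=\vb{b}$ delivers the claim. This mirrors the coordinate-by-coordinate induction used in the proof of Theorem~\ref{thm:1}.

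The base case $i=0$ is trivial (any $\vb{r}\in R$ serves). In the inductive step, if $i+1<k$ then condition~(1) of the definition of representation gives $\pi_{\{1,\ldots,i+1\}}(R)=\pi_{\{1,\ldots,i+1\}}(B)$, so $\vb{r}^{(i+1)}$ can be picked directly from $R$. The substantive regime is $i+1\ge k$. Setting $\vb{r}:=\vb{r}^{(i)}$, the pair $(r_{i+1},b_{i+1})$ lies in $\varphi_{i+1}(B)$ since $\vb{r}$ and $\vb{b}$ both belong to $B$ and agree on coordinates $1,\ldots,i$; condition~(2) then supplies $\vb{u},\vb{v}\in R$ with $\pi_{\{1,\ldots,i\}}(\vb{u})=\pi_{\{1,\ldots,i\}}(\vb{v})$, $u_{i+1}=r_{i+1}$, and $v_{i+1}=b_{i+1}$. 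From $\vb{r},\vb{u},\vb{v}$, together with further auxiliary tuples drawn from $R$ or $\langle R\rangle$ as needed, I will assemble $\vb{r}^{(i+1)}$ by applying the $(1,k-1)$-parallelogram term $p$ provided by \cite[Theorem~3.5]{KS:COAW}. For $k=2$ the combination is exactly as in the proof of Theorem~\ref{thm:1}: one application of a Mal'cev term to $\vb{r},\vb{u},\vb{v}$ gives $p(b_j,u_j,u_j)=b_j$ on coordinates $j\le i$ and $p(r_{i+1},r_{i+1},b_{i+1})=b_{i+1}$ on coordinate $i+1$.

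For $k\ge 3$ the combination is the main technical challenge. A single application of the $(k+3)$-ary term $p$ to $\vb{r},\vb{u},\vb{v}$ plus $k$ auxiliary inputs does not suffice unless the auxiliary inputs themselves already match $\vb{b}$ on all of coordinates $1,\ldots,i+1$ --- which is the object being produced. I expect the remedy to parallel the subpower-generation argument of Berman and Idziak~\cite{BI:VWFS}: iterating applications of $p$ and exploiting tuples from $R$ that match $\vb{b}$ on various $(k-1)$-element coordinate subsets (available from condition~(1)) together with witnesses of forks at various indices (available from condition~(2)) to build the required auxiliary inputs step by step. The bookkeeping is somewhat easier here than in \cite{BI:VWFS}, because condition~(2) in the present definition of representation supplies witnesses for forks at every index, not only at minority indices.
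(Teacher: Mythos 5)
Your outer framework coincides with the paper's: induct on the coordinate index, use condition~(1) of the definition of representation below index $k$, and use a fork witness from condition~(2) together with the parallelogram term above it. The $k=2$ case is complete and correct. But for $k\ge 3$ the heart of the lemma --- how to actually assemble $\vb{r}^{(i+1)}$ from $\vb{r},\vb{u},\vb{v}$ and auxiliary tuples --- is exactly the part you defer (``I expect the remedy to parallel the subpower-generation argument\dots''), so as written there is a genuine gap at the one step that carries the technical content.

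The missing idea is a \emph{second, inner} induction, and your diagnosis of the obstacle slightly misstates what the auxiliary inputs must satisfy, which is why the construction does not look circular once set up correctly. One proves: for every $T\subseteq\{1,\dots,i\}$ there is $f^T\in\langle R\rangle$ agreeing with $b$ on $T\cup\{i+1\}$, by induction on $|T|$. For $|T|\le k-2$ the set $T\cup\{i+1\}$ has fewer than $k$ elements, so $f^T$ comes straight from $R$ by condition~(1). For $|T|\ge k-1$, write $T=\{i_1,\dots,i_{|T|}\}$, put $U_j:=T\setminus\{i_j\}$ for $j\le k-1$, and take $f^{U_j}$ from the inner induction hypothesis; crucially, $f^{U_j}$ is \emph{allowed to be wrong at the single coordinate $i_j$}, because in
\[ f^T:=p(g,g',f',g,f^{U_1},\dots,f^{U_{k-1}}) \]
the value $f^{U_j}_{i_j}$ lands in the ``$z$'' (don't-care) slot of the $(1,k-1)$-parallelogram identity at coordinate $i_j$, while at every other coordinate of $T\cup\{i+1\}$ the identity evaluates to $b$. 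Taking $T=\{1,\dots,i\}$ closes the outer induction. Without this inner induction and the specific placement of the arguments of $p$, the proof is not complete; the appeal to ``iterating applications of $p$'' in the style of \cite{BI:VWFS} names the right toolbox but does not substitute for the argument.
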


\begin{proof}
 Let $b\in B$, and let $\langle R\rangle$ denote the subalgebra of $\A_1\times\dots\times\A_n$ that is
 generated by $R$. We will show that
\begin{equation} \label{eq:fb}
 \forall m\in\{1,\dots,n\}\ \exists f\in\langle R\rangle\ \forall i \leq m\colon f_i = b_i
\end{equation}
 by induction on $m$. The result then follows for $m = n$.
 From the definition of a representation~\eqref{eq:fb} holds for $m<k$. Assume $m \geq k$
 in the following. By the induction hypothesis we have $g\in\langle R\rangle$ such that
 $g_i = b_i$ for all $i\leq m-1$. Then $(g_m,b_m)$ is in $\varphi_m(B)$ and hence in
 $\varphi_m(R)$. Hence we have $g',f'\in R$ that witness the fork $(g_m,b_m)$ at $m$.

 We claim that 
\begin{equation} \label{eq:fT}
 \forall\, T\subseteq\{1,\dots,m-1\}\ \exists f^T\in\langle R\rangle\ \forall i\in T\cup\{m\}\colon f^T_i = b_i.
\end{equation}
 We will prove this by induction on $|T|$. Again, for $|T|\leq k-2$, we have such an
 $f^T\in R$ by the definition of a representation. Assume $|T|\geq k-1$ and
 $T = \{i_1,\dots,i_{|T|}\}$. For $j\in\{1,\dots,k-1\}$, we let $U_j := T\setminus\{i_j\}$.
 Now for every $j\in\{1,\dots,k-1\}$ the induction hypothesis yields $f^{U_j}\in\langle R\rangle$
 such that for all $i\in U_j\cup\{m\}$ we have $f^{U_j}_i = b_i$.
 Let $p$ be the $(1,k-1)$-parallelogram term that exists in the variety by~\cite{KS:COAW}.
 We define 
\[ f^T := p(g,g',f',g,f^{U_1},\dots,f^{U_{k-1}}). \]
 For $j\in \{1,\dots,k-1\}$ we obtain
\[ f^T_{i_j} = p(b_{i_j},g'_{i_j},g'_{i_j},b_{i_j},b_{i_j},\dots,b_{i_j},f^{U_j}_{i_j},b_{i_j},\dots,b_{i_j}) = b_{i_j}. \]
 For $i\in T\setminus\{i_1,\dots,i_{k-1}\}$ we have
\[ f^T_i = p(b_i,g'_i,g'_i,b_i,b_i,\dots,b_i) = b_i. \]
 Further
\[ f^T_m = p(g_m,g_m,b_m,g_m,b_m,\dots,b_m) = b_m. \]
 Thus the induction step of~\eqref{eq:fT} is proved. Now~\eqref{eq:fb} follows from~\eqref{eq:fT}
 for $T = \{1,\dots,m-1\}$. 
\end{proof}

\emph{Proof of Theorem~\ref{thm:p}:}
 Let $\ab{A}, \ab{B}$ satisfy the assumptions of Theorem~\ref{thm:p}
 Let $m,n\in\N$, let $\C$ be a subalgebra of $\A^m\times\B^n$, and let
 $D := \pi_{A^m}(C)\times\pi_{B^n}(C)$. We will show that
\begin{equation} \label{eq:CisD}
 C = D
\end{equation}
 by induction on $m+n$.
 The assertion is true if $m = 0$ or $n=0$ or $m+n\leq\max(2,k-1)$ by assumption~\eqref{it:small}.
 So we assume $m > 0, n>0$ and $m+n > \max(2,k-1)$.  
 We consider $\D$ as subalgebra of
 $\underbrace{\A\times\dots\times\A}_{m}\times\underbrace{\B\times\dots\times\B}_{n}$ and claim that
\begin{equation} \label{eq:Crep}
 C \text{ is a representation of } D.
\end{equation} 
 First let $T\subseteq\{1,\dots,m+n\}$ with $|T|<k$. Then $\pi_T(C)$ is a product subalgebra by
 assumption~\eqref{it:small}. It follows that  $\pi_T(C) = \pi_T(D)$.

 Next, by the induction hypothesis, we have $\pi_{\{1,\dots,m+n-1\}}(C) = \pi_{\{1,\dots,m+n-1\}}(D)$.
 Consequently $\varphi_i (C) = \varphi_i (D)$ for all $i \in \{1,\ldots, m+n-1\}$.
 It remains to show $\varphi_{m+n} (C) = \varphi_{m+n} (D)$. The inclusion $\subseteq$ is
 immediate. For the converse consider 
\begin{multline*}
 \gamma := \{ ((f_1,f_{m+n}),(g_1,g_{m+n})) \in (A\times B)^2 \setsuchthat  \\ f,g\in C \text{ and } 
 f_i = g_i \text{ for all } i\in\{2,\dots,m+n-1\} \}. 
\end{multline*}
 Clearly $\gamma$ is a tolerance of $\pi_{\{1,m+n\}}(\C)$, which is a subalgebra of $\A\times\B$.
 By assumption~\eqref{it:small} we have $E\leq\A, F\leq\B$ such that
 $\pi_{1,m+n}(\C) = \ab{E}\times\ab{F}$. 
 By assumption~\eqref{it:tol} we have tolerances $\alpha$ of $\ab{E}$ and $\beta$ of $\ab{F}$
 such that $\gamma = \alpha\times_c\beta$.

 Let $(u,v)\in\varphi_{m+n} (D)$ be a fork that is witnessed by $f,g\in D$.
% such that $f_i = g_i$ for all $i<m+n$ and $f_{m+n} = u, g_{m+n} = v$.
 By the induction hypothesis we have $f',g'\in C$ such that
 $f'_i = f_i$ and $g'_i = g_i$ for all $i > 1$. Then 
\[ ((f'_1,\underbrace{f'_{m+n}}_{u}),(g'_1,\underbrace{g'_{m+n}}_{v})) \in \gamma. \] 
 In particular $(u,v)\in\beta$ and
\[ ((f'_1,u), (f'_1,v)) \in\alpha\times_c\beta = \gamma. \]
 By the definition of $\gamma$ we then have $f'',g''\in C$
 that witness the fork $(u,v)$ at $m+n$.
% such that  $f''_i = g''_i$ for all $i<m+n$ and $f''_{m+n} = u, f''_{m+n} = v$.
 Hence $(u,v)\in\varphi_{m+n} (C)$ and~\eqref{eq:Crep} is proved.
% Thus $C$ is a representation for $D$.
 By Lemma~\ref{le:rep} it follows that $C=D$. \qed

\section{Independent algebras}   \label{sec:independent}
In this section we will relate our results on product subalgebras
to independent varieties and algebras.
The following lemma explains basic relations 
between these concepts.
The implication~\eqref{it:in4}$\Rightarrow$\eqref{it:in1} comes 
from \cite[Corollary~2.9]{HK:IADF} (cf. \cite[Theorem~3.2]{JT:POCO}). 
\begin{lem} \label{lem:indep1}
   Let $\ab{A}$ and $\ab{B}$ be similar algebras. Then the
   following are equivalent:
   \begin{enumerate}
       \item \label{it:in1}  $\ab{A}$ and $\ab{B}$ are independent.
       \item \label{it:in2}  For all (possibly infinite) sets $I, J$, every subalgebra $\ab{E}$ of
             $\ab{A}^I \times \ab{B}^J$ is a product subalgebra.
       \item \label{it:in3}  For all sets $I, J$ with $|I| \le |A|^2$ and $|J| \le |B|^2$,
            every subalgebra $\ab{E}$ of
             $\ab{A}^I \times \ab{B}^J$ is a product subalgebra.
       \item \label{it:in3a} For all subdirect products
                             $\ab{E}$ of $\ab{F}_{V(\ab{A})} (2)$ and $\ab{F}_{V (\ab{B})} (2)$, we have
                             $\ab{E} = \ab{F}_{V(\ab{A})} (2) \times \ab{F}_{V (\ab{B})} (2)$.
   \end{enumerate}
    If $\ab{A}$ and $\ab{B}$ lie in the same congruence permutable 
    variety $V$, then these four items are furthermore equivalent to
    \begin{enumerate}
        \setcounter{enumi}{4}
          \item \label{it:in4} $V(\ab{A})$ and $V(\ab{B})$ are disjoint.
    \end{enumerate} 
\end{lem}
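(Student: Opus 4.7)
The plan is to prove the cyclic chain \eqref{it:in1}$\Rightarrow$\eqref{it:in2}$\Rightarrow$\eqref{it:in3}$\Rightarrow$\eqref{it:in3a}$\Rightarrow$\eqref{it:in1}, and then handle \eqref{it:in4} separately under the congruence permutability hypothesis. For \eqref{it:in1}$\Rightarrow$\eqref{it:in2}, I would take the witnessing binary term $t(x,y)$ and apply it componentwise to any two elements $(\vb{a},\vb{b}), (\vb{a}',\vb{b}')$ of a subalgebra of $\ab{A}^I\times \ab{B}^J$: each coordinate in the $\ab{A}$-block yields the first argument and each coordinate in the $\ab{B}$-block yields the second, so $(\vb{a},\vb{b}')$ lies in the subalgebra. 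The step \eqref{it:in2}$\Rightarrow$\eqref{it:in3} is immediate by specialising the index sets.

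The main step is \eqref{it:in3}$\Rightarrow$\eqref{it:in3a}. Here I plan to use the standard embedding of $\ab{F}_{V(\ab{A})}(2)$ into $\ab{A}^{A^2}$ (sending each element to the tuple of its values under the $|A|^2$ assignments of the two free generators into $\ab{A}$) and the analogous embedding $\ab{F}_{V(\ab{B})}(2)\hookrightarrow \ab{B}^{B^2}$. A subdirect product $\ab{E}$ of the two free algebras thus sits inside $\ab{A}^{A^2}\times \ab{B}^{B^2}$; by \eqref{it:in3} it is a product subalgebra, and subdirectness then forces both factor projections to be surjective onto the full free factors, so $\ab{E}$ equals the entire direct product.

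For \eqref{it:in3a}$\Rightarrow$\eqref{it:in1}, I would consider the subalgebra $\ab{E}$ of $\ab{F}_{V(\ab{A})}(2)\times \ab{F}_{V(\ab{B})}(2)$ generated by $(\overline{x}_A,\overline{x}_B)$ and $(\overline{y}_A,\overline{y}_B)$, where $\overline{x}_A,\overline{y}_A$ and $\overline{x}_B,\overline{y}_B$ denote the free generators of the respective factors. Both projections hit a generating set of their factor, so $\ab{E}$ is subdirect; by \eqref{it:in3a} we get $(\overline{x}_A,\overline{y}_B)\in E$. This produces a binary term $t$ with $t(\overline{x}_A,\overline{y}_A)=\overline{x}_A$ in $\ab{F}_{V(\ab{A})}(2)$ and $t(\overline{x}_B,\overline{y}_B)=\overline{y}_B$ in $\ab{F}_{V(\ab{B})}(2)$, and the universal property of free algebras turns these into the identities $V(\ab{A})\models t(x,y)\approx x$ and $V(\ab{B})\models t(x,y)\approx y$.

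Finally, under congruence permutability, \eqref{it:in1}$\Rightarrow$\eqref{it:in4} is routine (any algebra of size at least $2$ common to both varieties would force $t(x,y)\approx x \approx y$, a contradiction), and the converse \eqref{it:in4}$\Rightarrow$\eqref{it:in1} is the already cited Hu--Kelenson result~\cite[Corollary~2.9]{HK:IADF}. I do not expect any single step to be a serious obstacle; the only delicate bookkeeping is in \eqref{it:in3}$\Rightarrow$\eqref{it:in3a}, where the bounds $|I|\le|A|^2$ and $|J|\le|B|^2$ match precisely the sizes of the embedding targets for the two-generated free algebras, and this is what makes \eqref{it:in3} strong enough to control all subdirect products of these free algebras.
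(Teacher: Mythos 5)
Your proposal is correct and follows essentially the same route as the paper: the same cyclic chain (1)$\Rightarrow$(2)$\Rightarrow$(3)$\Rightarrow$(3a)$\Rightarrow$(1), with the same componentwise application of $t$, the same embeddings of the two-generated free algebras into $\ab{A}^{A^2}$ and $\ab{B}^{B^2}$, and the same two-generated subdirect product of $\ab{F}_{V(\ab{A})}(2)\times\ab{F}_{V(\ab{B})}(2)$ to extract the term. The only divergence is that for (5)$\Rightarrow$(1) you defer entirely to the cited Hu--Kelenson result, whereas the paper gives a short self-contained argument (proving (5)$\Rightarrow$(2) directly: by Fleischer's Lemma any subalgebra of $\ab{A}^I\times\ab{B}^J$ is a pullback over a common homomorphic image $\ab{D}\in V(\ab{A})\cap V(\ab{B})$, which disjointness forces to be a one-element algebra); this is acceptable but you may wish to include that argument for completeness.
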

\emph{Proof:}
 \eqref{it:in1}$\Rightarrow$\eqref{it:in2}:
    Let $t$ be a binary term witnessing the independence of $\ab{A}$ and $\ab{B}$.
    Let $(\vb{a}, \vb{b})$ and $(\vb{c}, \vb{d})$ be elements
    of $\ab{E}$. Then $t^{\ab{A}^I \times \ab{B}^J} ( (\vb{a}, \vb{b}), (\vb{c}, \vb{d}) ) 
    = (\vb{a}, \vb{d})$, and hence $(\vb{a}, \vb{d}) \in E$.

 \eqref{it:in2}$\Rightarrow$\eqref{it:in3}: Obvious.

 \eqref{it:in3}$\Rightarrow$\eqref{it:in3a}: Let $\ab{E}$ be a subdirect product of
    $\ab{F}_{V(\ab{A})} (2) \times \ab{F}_{V (\ab{B})} (2)$. $\ab{F}_{V (\ab{A})} (2)$ is isomorphic
    to a subalgebra $\ab{A'}$ of $\ab{A}^{A^2}$, and similarly $\ab{F}_{V (\B)} (2) \cong \ab{B'} \le \ab{B}^{B^2}$.
    Via these isomorphisms, we obtain an isomorphic copy $\ab{E'}$ of $\ab{E}$ such that
    $\ab{E'}$ is a subalgebra of $\ab{A}^{A^2} \times \ab{B}^{B^2}$. Using item~\eqref{it:in3} and the fact
    that $\ab{E'}$ is a subdirect product of $\ab{A'}$ and $\ab{B'}$,
    we obtain $\ab{E'} = \ab{A'} \times \ab{B'}$, and thus $\ab{E} = \ab{F}_{V(\ab{A})} (2) \times \ab{F}_{V (\ab{B})} (2)$.
     
 \eqref{it:in3a}$\Rightarrow$\eqref{it:in1}:
    We let $F_{V (\A)} (2) = F_{V (\A)} (x,y)$, and for two terms $s(x,y)$ and $t(x,y)$, we write
    $s {\sim}_{\A} t$ if $\A \models s \approx t$.
    Now $E := \{ (s / {\sim}_{\A}, s / {\sim}_{\B}) \setsuchthat s \text{ is a term in $x, y$} \}$
    is the universe of a subdirect product of $F_{V (\A)} (x,y) \times F_{V (\B)} (x,y)$. 
    By~\eqref{it:in3a}, $(x / {\sim}_{\A}, y / {\sim}_{\B} ) \in E$, and therefore there exists
    a binary term $t(x,y)$ such that $t {\sim}_{\A} x$ and $t {\sim}_{\B} y$. Thus
    $\A$ and $\B$ are independent.

  \eqref{it:in1}$\Rightarrow$\eqref{it:in4}: Let $t(x,y)$ be a binary term
     witnessing the independence of $\ab{A}$ and $\ab{B}$.
     Then
     $V(\ab{A}) \cap V (\ab{B}) \models x \approx t(x,y) \approx y$, and
      thus this intersection contains only one element algebras.

  For the implication \eqref{it:in4}$\Rightarrow$\eqref{it:in2}, we assume
  that $\ab{A}$ and $\ab{B}$ lie in a congruence permutable variety.
  Let $I$ and $J$ be sets, and let $\ab{E}$ be a subalgebra of 
  $\ab{A}^I \times \ab{B}^J$. Then $\ab{E}$ is a subdirect product
  of $\pi_{\ab{A}^I} (\ab{E}) \times \pi_{\ab{B}^J} (\ab{E})$. Now
  by Fleischer's Lemma \cite[Lemma~IV.10.1]{BS:ACIU}, there is 
  an algebra $\ab{D}$ and there are surjective homomorphisms
  $\alpha' : \pi_{\ab{A}^I} (\ab{E}) \to \ab{D}$ and  
  $\beta'  : \pi_{\ab{B}^J} (\ab{E}) \to \ab{D}$ such that
  $E = \{ (x,y) \in \pi_{\ab{A}^I} (\ab{E}) \times \pi_{\ab{B}^J} (\ab{E})
                   \setsuchthat 
                 \alpha' (x) = \beta' (y) \}$. Since $\ab{D}$ lies
   in $V (\ab{A}) \cap V (\ab{B})$, we have $|D| = 1$. Thus
   $\ab{E} = \pi_{\ab{A}^I} (\ab{E}) \times \pi_{\ab{B}^J} (\ab{E})$, and it is
   therefore a product subalgebra. \qed

Now our results from Section~\ref{sec:pro} give the following
 characterizations of independence:
% sufficient conditions for two finite algebras to be independent:
\begin{thm} \label{thm:indcp}
 Let $\ab{A}$, $\ab{B}$ be finite algebras in a congruence permutable variety.
% Then each of the following
%   conditions implies that $\ab{A}$ and $\ab{B}$ are independent.
 Then the following are equivalent:
    \begin{enumerate}
\item \label{it:sc0}
  $\ab{A}$ and $\ab{B}$ are independent.
       \item \label{it:sc1}
             All subalgebras of $\ab{A} \times \ab{B}$ are product subalgebras,
             and all congruences of all subalgebras of $\ab{A} \times \ab{B}$ are
             product congruences.
       \item \label{it:sc2}
             All subalgebras of $\ab{A}^2 \times \ab{B}^2$ are product subalgebras.
       \item \label{it:sc3}
             $HS (\ab{A}^2) \cap HS (\ab{B}^2)$ contains only one element algebras.
    \end{enumerate}
\end{thm}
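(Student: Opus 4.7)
The plan is to prove the cycle \eqref{it:sc0}$\Rightarrow$\eqref{it:sc1}$\Rightarrow$\eqref{it:sc2}$\Rightarrow$\eqref{it:sc3}$\Rightarrow$\eqref{it:sc0}, drawing on three tools: Lemma~\ref{lem:indep1} (to translate between independence and product subalgebra conditions), Theorem~\ref{thm:1} (to amplify a product-subalgebra statement about $\ab{A}\times\ab{B}$ together with its congruences to all subpowers $\ab{A}^m\times\ab{B}^n$), and Fleischer's Lemma (to describe subdirect products in the congruence permutable setting).

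For \eqref{it:sc0}$\Rightarrow$\eqref{it:sc1}, Lemma~\ref{lem:indep1}\eqref{it:in1}$\Rightarrow$\eqref{it:in2} gives that every subalgebra of $\ab{A}\times\ab{B}$ is a product subalgebra $\ab{E}\times\ab{F}$, and any congruence $\theta$ of $\ab{E}\times\ab{F}$ is itself a subalgebra of $\ab{E}^2\times\ab{F}^2\leq\ab{A}^2\times\ab{B}^2$, so the lemma applies again to yield $\theta=\alpha\times_c\beta$; a routine check using reflexivity, symmetry and transitivity of $\theta$ then shows $\alpha\in\Con(\ab{E})$ and $\beta\in\Con(\ab{F})$. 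The implication \eqref{it:sc1}$\Rightarrow$\eqref{it:sc2} is immediate from Theorem~\ref{thm:1} with $m=n=2$. For \eqref{it:sc2}$\Rightarrow$\eqref{it:sc3}, given $\ab{D}\in HS(\ab{A}^2)\cap HS(\ab{B}^2)$ exhibited by surjections $\phi_A:\ab{A}'\to\ab{D}$ and $\phi_B:\ab{B}'\to\ab{D}$ with $\ab{A}'\leq\ab{A}^2$ and $\ab{B}'\leq\ab{B}^2$, I form the fiber product $\{(a,b)\in A'\times B'\setsuchthat\phi_A(a)=\phi_B(b)\}$; this is a subalgebra of $\ab{A}^2\times\ab{B}^2$ whose $A^2$- and $B^2$-projections are $\ab{A}'$ and $\ab{B}'$, so being a product subalgebra by \eqref{it:sc2} forces it to equal $\ab{A}'\times\ab{B}'$, whence $|D|=1$.

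The delicate step is \eqref{it:sc3}$\Rightarrow$\eqref{it:sc0}, and the main obstacle is that \eqref{it:sc3} does not on its face imply that $V(\ab{A})\cap V(\ab{B})$ is trivial: a two-generated algebra in this intersection is only guaranteed to lie in $HS(\ab{A}^{|A|^2})\cap HS(\ab{B}^{|B|^2})$, not in $HS(\ab{A}^2)\cap HS(\ab{B}^2)$. I route around this by first deriving \eqref{it:sc1} from \eqref{it:sc3} using Fleischer's Lemma. Any subalgebra $\ab{S}\leq\ab{A}\times\ab{B}$, viewed as a subdirect product of $\pi_A(\ab{S})$ and $\pi_B(\ab{S})$, is by Fleischer a pullback over a common quotient $\ab{D}\in HS(\ab{A})\cap HS(\ab{B})\subseteq HS(\ab{A}^2)\cap HS(\ab{B}^2)$, which is trivial by \eqref{it:sc3}; hence $\ab{S}$ is a product subalgebra. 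The same Fleischer argument applied to $\theta\in\Con(\ab{E}\times\ab{F})$, viewed as a subdirect product of $\pi_{E^2}(\theta)\leq\ab{E}^2$ and $\pi_{F^2}(\theta)\leq\ab{F}^2$, shows $\theta$ is a product congruence. Having obtained \eqref{it:sc1}, Theorem~\ref{thm:1} promotes the product-subalgebra property to all $\ab{A}^m\times\ab{B}^n$, and Lemma~\ref{lem:indep1}\eqref{it:in3}$\Rightarrow$\eqref{it:in1} with $|I|=|A|^2$, $|J|=|B|^2$ delivers independence.
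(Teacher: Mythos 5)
Your proof is correct, and it uses the same three tools as the paper (Lemma~\ref{lem:indep1}, Theorem~\ref{thm:1}, Fleischer's Lemma), but it runs the cycle in the opposite direction and closes it differently. The paper proves \eqref{it:sc0}$\Rightarrow$\eqref{it:sc3}$\Rightarrow$\eqref{it:sc2}$\Rightarrow$\eqref{it:sc1}$\Rightarrow$\eqref{it:sc0}: the first implication is a one-line term argument ($V(\ab{A}) \cap V(\ab{B}) \models x \approx t(x,y) \approx y$, and $HS(\ab{A}^2)\cap HS(\ab{B}^2)$ sits inside that intersection); \eqref{it:sc3}$\Rightarrow$\eqref{it:sc2} is Fleischer applied to a subalgebra of $\ab{A}^2\times\ab{B}^2$; and \eqref{it:sc2}$\Rightarrow$\eqref{it:sc1} encodes a congruence $\gamma$ of $\ab{A}\times\ab{B}$ as the subalgebra $E(\gamma)\leq\ab{A}^2\times\ab{B}^2$. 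You instead prove \eqref{it:sc0}$\Rightarrow$\eqref{it:sc1}$\Rightarrow$\eqref{it:sc2}$\Rightarrow$\eqref{it:sc3} and close the loop with \eqref{it:sc3}$\Rightarrow$\eqref{it:sc1}$\Rightarrow$\eqref{it:sc0}, using Fleischer twice in the last leg (once for subalgebras of $\ab{A}\times\ab{B}$, once for congruences viewed inside $\ab{E}^2\times\ab{F}^2$) and an explicit pullback construction for \eqref{it:sc2}$\Rightarrow$\eqref{it:sc3}, which is effectively the converse direction of the paper's Fleischer step. Your explicit remark that \eqref{it:sc3} does not on its face give triviality of $V(\ab{A})\cap V(\ab{B})$ --- since a two-generated member of that intersection need only lie in $HS(\ab{A}^{|A|^2})\cap HS(\ab{B}^{|B|^2})$ --- is exactly the right point, and your detour through \eqref{it:sc1} and Theorem~\ref{thm:1} handles it correctly; the paper handles the same issue implicitly by ordering the cycle so that \eqref{it:sc3} only ever needs to control subalgebras of $\ab{A}^2\times\ab{B}^2$. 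The net cost of your arrangement is one extra implication (five rather than four), but each step is individually routine; the mathematical content is the same.
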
  

\begin{proof}
 \eqref{it:sc1}$\Rightarrow$\eqref{it:sc0}:
%  We start by proving that~\eqref{it:sc1} implies that $\ab{A}$ and $\ab{B}$ are independent.
  We assume that~\eqref{it:sc1} holds. Then by Theorem~\ref{thm:1}, for
   all $m, n \in \N_0$ with   $m \le  |A|^2$ and  $n := |B|^2$, all subalgebras
   of $\ab{A}^m \times \ab{B}^n$ are product subalgebras. Now by Lemma~\ref{lem:indep1},
   $\ab{A}$ and $\ab{B}$ are independent.

\eqref{it:sc2}$\Rightarrow$\eqref{it:sc1}:
%   Next, we prove~\eqref{it:sc2}$\Rightarrow$\eqref{it:sc1}.
% To this end, let
 Let $\gamma$ be a congruence of $\ab{A} \times \ab{B}$. 
   Let $\ab{E} (\gamma)$ be the subalgebra of $\ab{A}^2 \times \ab{B}^2$ given by
    \[
       E(\gamma) = \{ ( (a_1, a_2), (b_1, b_2)) \setsuchthat
                        ((a_1, b_1), (a_2, b_2)) \in \gamma \}.
    \]
    From the fact that $\ab{E} (\gamma)$ is a product subalgebra,
    we obtain that $\gamma$ is a product congruence of $\A \times \B$.

\eqref{it:sc3}$\Rightarrow$\eqref{it:sc2}:
%    For \eqref{it:sc3}$\Rightarrow$\eqref{it:sc2},
 We let $\ab{C}$ be a subalgebra
    of $\ab{A}^2 \times \ab{B}^2$. By Fleischer's Lemma, there 
    are a subalgebra $\ab{A}'$ of $\ab{A}^2$, a subalgebra
    $\ab{B}'$ of $\ab{B}^2$, an algebra $\ab{D}$, and epimorphisms
    $\alpha' : \ab{A}' \to \ab{D}$, $\beta' : \ab{B}' \to \ab{D}$
    such that $C = \{ (a', b') \in A' \times B' \setsuchthat
     \alpha' (a') = \beta' (b') \}$. Since $\ab{D} \in HS (\ab{A}^2) \cap
     HS (\ab{B}^2)$, $\ab{D}$ is a one element algebra, and therefore
     $C = A' \times B'$, and it is therefore a product subalgebra.
%    This completes the proof of the implication \eqref{it:sc3}$\Rightarrow$\eqref{it:sc2}.
%    Hence algebras satisfying \eqref{it:sc3} are independent.

\eqref{it:sc0}$\Rightarrow$\eqref{it:sc3} is proved in the same way
 as~\eqref{it:in1}$\Rightarrow$\eqref{it:in4} of Lemma~\ref{lem:indep1}.
\end{proof}

\begin{thm} \label{thm:indedge}
 Let $k\geq 2$, and let $\ab{A}$, $\ab{B}$ be finite algebras in a variety with $k$-edge term.
% Then each of the following
%   conditions implies that $\ab{A}$ and $\ab{B}$ are independent.
 Then the  following are equivalent:
\begin{enumerate}
\item \label{it:sc0a}
  $\ab{A}$ and $\ab{B}$ are independent.
       \item \label{it:sc4}
 For all $r,s\in\N$ with $r+s\leq\max(2,k-1)$, every subalgebra of $\A^r\times\B^s$
 is a product subalgebra, and for all $E\leq\A, F\leq\B$, every tolerance of $\ab{E}\times\ab{F}$ is a 
             product tolerance.
       \item \label{it:sc5} 
 For all $r,s\in\N$ with $r+s\leq\max(4,k-1)$, every subalgebra of $\A^r\times\B^s$
             is a product subalgebra.
     \end{enumerate}
\end{thm}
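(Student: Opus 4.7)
\emph{Proof proposal for Theorem~\ref{thm:indedge}.} My plan is to close the cycle (1)$\Rightarrow$(3)$\Rightarrow$(2)$\Rightarrow$(1), paralleling the proof of Theorem~\ref{thm:indcp}. Two of the three implications reduce immediately to earlier results in the paper, and only (3)$\Rightarrow$(2) involves any genuinely new work.

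For (2)$\Rightarrow$(1), observe that~\eqref{it:sc4} is verbatim the hypothesis of Theorem~\ref{thm:p}, which therefore yields that every subalgebra of every $\A^m\times\B^n$ is a product subalgebra. Taking $m=|A|^2$, $n=|B|^2$ and invoking Lemma~\ref{lem:indep1}~\eqref{it:in3}$\Rightarrow$\eqref{it:in1}, we conclude that $\A$ and $\B$ are independent. For (1)$\Rightarrow$(3), Lemma~\ref{lem:indep1}~\eqref{it:in1}$\Rightarrow$\eqref{it:in2} ensures that every subalgebra of every $\A^r\times\B^s$ is a product subalgebra; in particular this covers the finitely many cases $r+s\le\max(4,k-1)$ relevant to~\eqref{it:sc5}.

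The substantive step is (3)$\Rightarrow$(2). The product subalgebra clause of~\eqref{it:sc4} is immediate since $\max(2,k-1)\le\max(4,k-1)$. For the tolerance clause, fix $\ab{E}\le\A$, $\ab{F}\le\B$, and a tolerance $\gamma$ of $\ab{E}\times\ab{F}$. Viewing $\gamma$ as a subalgebra of $(\ab{E}\times\ab{F})^2$ and reordering coordinates, identify it with
\[
 \tilde{\gamma} := \{(e_1,e_2,f_1,f_2) \setsuchthat ((e_1,f_1),(e_2,f_2))\in\gamma\},
\]
which is then a subalgebra of $\A^2\times\B^2$ contained in $E^2\times F^2$. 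Since $2+2=4\le\max(4,k-1)$, hypothesis~\eqref{it:sc5} tells us $\tilde{\gamma}$ is a product subalgebra, so setting $\alpha:=\pi_{A^2}(\tilde{\gamma})$ and $\beta:=\pi_{B^2}(\tilde{\gamma})$ we have $\tilde{\gamma}=\alpha\times\beta$ with $\alpha\subseteq E^2$ and $\beta\subseteq F^2$. Both $\alpha$ and $\beta$ are subalgebras of the respective squares, and the reflexivity and symmetry of $\gamma$ transfer at once to $\alpha$ on $E$ and to $\beta$ on $F$. Translating back to $(\ab{E}\times\ab{F})^2$ yields $\gamma=\alpha\times_c\beta$, as required.

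There is no real obstacle here: the heavy lifting has already been done in Theorem~\ref{thm:p}, and the step (3)$\Rightarrow$(2) is essentially a bookkeeping exercise that encodes a tolerance on $\ab{E}\times\ab{F}$ as a $4$-ary relation inside $\A^2\times\B^2$ and then notes that being a product subalgebra there is precisely the tolerance-level analogue of the product congruence condition used in Theorem~\ref{thm:indcp}.
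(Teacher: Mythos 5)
Your proposal is correct and follows essentially the same route as the paper: the implications \eqref{it:sc4}$\Rightarrow$\eqref{it:sc0a} and \eqref{it:sc0a}$\Rightarrow$\eqref{it:sc5} are delegated to Theorem~\ref{thm:p} and Lemma~\ref{lem:indep1} exactly as in the printed proof, and your encoding of a tolerance $\gamma$ of $\ab{E}\times\ab{F}$ as a $4$-ary subalgebra $\tilde{\gamma}$ of $\ab{A}^2\times\ab{B}^2$ is precisely the paper's construction of $\ab{E}(\tau)$ carried over from the proof of \eqref{it:sc2}$\Rightarrow$\eqref{it:sc1} in Theorem~\ref{thm:indcp}. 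Your write-up is in fact slightly more careful than the paper's terse sketch, since it explicitly treats arbitrary subalgebras $\ab{E}\leq\ab{A}$, $\ab{F}\leq\ab{B}$ and verifies that reflexivity and symmetry pass to the factors $\alpha$ and $\beta$.
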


\begin{proof}
\eqref{it:sc4}$\Rightarrow$\eqref{it:sc0a} follows from Theorem~\ref{thm:p} and
     the implication
     \eqref{it:in3}$\Rightarrow$\eqref{it:in1} of 
     Lemma~\ref{lem:indep1}. 
%     Assume that \eqref{it:sc4} holds. Then Theorem~\ref{thm:p} and
%     the implication
%     \eqref{it:in3}$\Rightarrow$\eqref{it:in1} of 
%     Lemma~\ref{lem:indep1} 
%      imply that 
%     $\ab{A}$ 
%     and $\B$ are independent.

\eqref{it:sc5}$\Rightarrow$\eqref{it:sc4},
%     For \eqref{it:sc5}$\Rightarrow$\eqref{it:sc4},
 We associate
     a subalgebra $\ab{E} (\tau)$ of $\ab{A}^2 \times \ab{B}^2$ with every tolerance
     $\tau$ of $\ab{A} \times \ab{B}$ and proceed as in the proof
     of \eqref{it:sc2}$\Rightarrow$\eqref{it:sc1} of Theorem~\ref{thm:indcp}.

\eqref{it:sc0a}$\Rightarrow$\eqref{it:sc5} follows from~\eqref{it:in1}$\Rightarrow$\eqref{it:in2}
 of Lemma~\ref{lem:indep1}.
\end{proof}

\section{Polynomial functions}
    As observed in \cite{Fo:TIOA, GLP:JADP}, term functions of independent algebras
    $\A$ and $\B$
    can be \emph{paired} in the sense that for all $k$-ary terms $r$ and $s$,
     the term $u := t (r,s)$ satisfies $\A \models u \approx r$ and $\B \models u \approx s$,
    where $t$ is a binary term witnessing the independence of $\A$ and $\B$.
    Hence we have:
        \begin{lem} \label{lem:terms}
        Let $\ab{A}$ and $\B$ be similar independent algebras, and let $k \in \N$.
        Then the mapping
        $\phi : \Clo_k (\A) \times \Clo_k (\B) \to \Clo_k (\A \times \B)$ with
        $$\phi (f,g) ( (a_1,b_1), \ldots, (a_k, b_k)) = (f (\vb{a}), g(\vb{b}))$$
% \text{ for }
 for $f\in\Clo_k(\A),g\in\Clo_k (\B), \vb{a} \in A^k, \vb{b} \in B^k$
        is a bijection.
        \end{lem}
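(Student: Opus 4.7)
The plan is to check three properties in sequence: that $\phi$ is well-defined (its image actually lies in $\Clo_k(\A \times \B)$), that $\phi$ is injective, and that $\phi$ is surjective. Independence will be needed only for the first step; this is essentially the \emph{pairing} observation recalled in the paragraph preceding the lemma statement.

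The key step is well-definedness. Given $f \in \Clo_k(\A)$ and $g \in \Clo_k(\B)$, pick $k$-ary terms $r$ and $s$ in the common language such that $r^{\A} = f$ and $s^{\B} = g$. Let $t(x,y)$ be a binary term witnessing the independence of $\A$ and $\B$, and set $u(x_1,\dots,x_k) := t(r(x_1,\dots,x_k), s(x_1,\dots,x_k))$. Because $\A \models t(x,y) \approx x$ and $\B \models t(x,y) \approx y$, we get $\A \models u \approx r$ and $\B \models u \approx s$. Since term operations on a direct product are computed componentwise, the resulting identity $u^{\A \times \B}((a_1,b_1),\dots,(a_k,b_k)) = (f(\vb{a}), g(\vb{b})) = \phi(f,g)((a_1,b_1),\dots,(a_k,b_k))$ shows that $\phi(f,g) = u^{\A \times \B} \in \Clo_k(\A \times \B)$.

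Injectivity and surjectivity will be routine and independence-free. For injectivity, if $\phi(f_1,g_1) = \phi(f_2,g_2)$, then projecting the output onto the first coordinate (for any choice of second-coordinate inputs) gives $f_1(\vb{a}) = f_2(\vb{a})$ for all $\vb{a} \in A^k$, and projecting onto the second coordinate gives $g_1 = g_2$. For surjectivity, every $h \in \Clo_k(\A \times \B)$ is of the form $w^{\A \times \B}$ for some $k$-ary term $w$, and componentwise interpretation immediately yields $h = \phi(w^{\A}, w^{\B})$.

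I expect the only point worth pausing over to be the pairing construction of $u$, and the hypothesis of independence is precisely what makes it work; no further obstacles are anticipated.
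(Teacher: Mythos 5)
Your proposal is correct and follows essentially the same route as the paper: well-definedness via the pairing term $u := t(r,s)$ using the independence witness $t$, injectivity by projecting onto the two coordinates, and surjectivity via $h = w^{\A\times\B} = \phi(w^{\A}, w^{\B})$. No issues.
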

    \emph{Proof:} 
       We first show that for $f \in \Clo_k (\ab{A})$ and $g \in \Clo_k (\ab{B})$,
       we have $\phi (f,g) \in \Clo_k (\ab{A} \times \ab{B})$.
       Let $r$ and $s$ be $k$-variable terms with $r^{\ab{A}} = f$ and
       $s^{\B} = g$. 
       Let $t$ be a term witnessing the independence of $\A$ and $\B$.
       Then for $u := t(r,s)$ we have
       $u^{\A \times \B} ((a_1,b_1), \ldots, (a_k, b_k)) =
        (r^{\A} (\vb{a}), s^{\B} (\vb{b}))$ for all $\vb{a} \in A^k$ and $\vb{b} \in \B^k$.
       The mapping $\phi$ is clearly injective, and for proving that $\phi$ is surjective, let $u$ be a term. Then
       $\phi (u^{\A}, u^{\B}) = u^{\A \times \B}$, and hence the range of $\phi$ 
       contains $\Clo_k (\A \times \B)$. \qed
    
     We can now easily provide an example showing that Theorem~\ref{thm:indcp} does not hold
     for infinite algebras.

\begin{exa} \label{exa:pruefer}
 Let $p$ and $q$ be different primes, and let $\ab{A}$ be the Pr\"ufer
     group $\Z_{p^{\infty}}$ and $\B := \Z_{q^{\infty}}$. It is easy to see that
     for all $m,n \in \N$, all subalgebras of $\ab{A}^m \times \ab{B}^n$ are product subalgebras.
     Since all binary  term functions of $\ab{A} \times \ab{B}$ are of the form $((a_1, b_1), (a_2, b_2))
     \mapsto (z_1 a_1 + z_2 a_2, z_1 b_1 + z_2 b_2)$, there is no term inducing the
      function $((a_1, b_1), (a_2, b_2)) \mapsto (a_1, b_2)$. Hence $\A$ and $\B$ are not independent.
\end{exa}
     
     We will now consider polynomial functions on direct products of two algebras.
     Pilz conjectured that for expanded groups $\ab{A}$ and $\ab{B}$ such that
     all congruences of $\ab{A} \times \ab{B}$ are product congruences the following holds:
     for all unary polynomial functions $f$ on $\A$ and $g$ on $\B$, the function
     $(a,b) \mapsto (f(a), g(b))$ is a polynomial function on $\A \times \B$
     (cf. \cite[Conjecture~2.10]{Pi:NOCF}). In \cite{Ai:ONIA}, the conjecture
     was verified for finite $\A$ and $\B$, and \cite{KM:PFOS} generalized
     this result to finite algebras with a Mal'cev or a majority term.
     The following theorem generalizes two of their results 
     to algebras
     with a $3$-edge term.
     \begin{thm} \label{thm:polies}
        Let $\ab{A}$ and $\ab{B}$ be finite algebras in a variety
        with a $3$-edge term, and let $k \in \N$.
        We assume that every tolerance of $\ab{A} \times \ab{B}$
        is a product tolerance.
        Let
                 $\psi : \Pol_k (\ab{A}) \times \Pol_k (\ab{B}) \to
                          (A \times B)^{(A \times B)^k}$ be the mapping
          defined by 
                 \[ \psi (f, g) \, ( (a_1, b_1), \ldots, (a_k, b_k) ) :=
                      (f (\vb{a}), g (\vb{b}))\]
        for $f\in\Pol_k (\ab{A}),g\in\Pol_k (\ab{B})$, $\vb{a} \in A^k$, and $\vb{b} \in B^k$.
 Then $\psi$ is a bijection from
                     $\Pol_k (\ab{A}) \times \Pol_k (\ab{B})$ to
                     $\Pol_k (\ab{A} \times \B)$.
     \end{thm}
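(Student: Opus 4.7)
The plan is to deduce this from Theorem~\ref{thm:p} by the standard trick of expanding $\A$ and $\B$ by constants. Injectivity of $\psi$ is immediate: for fixed $\vb{b} \in B^k$, the first coordinate of $\psi(f,g)((a_1,b_1),\ldots,(a_k,b_k))$ is $f(\vb{a})$, so $f$ is determined, and symmetrically for $g$. The inclusion $\Pol_k(\A \times \B) \subseteq \psi(\Pol_k(\A) \times \Pol_k(\B))$ is routine: the generators of $\Pol_k(\A \times \B)$ --- the $k$ projections and the constants $(a,b)$ --- lie in the image of $\psi$, and this image is closed under the coordinatewise operations of $\A \times \B$.

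For the hard inclusion $\psi(\Pol_k(\A) \times \Pol_k(\B)) \subseteq \Pol_k(\A \times \B)$, I would introduce expansions $\A^+$ and $\B^+$ of $\A$ and $\B$ in a common signature obtained by adjoining, for each $(a,b) \in A \times B$, a nullary operation interpreted as $a$ in $\A^+$ and as $b$ in $\B^+$. Then $\A^+ \times \B^+$ equals $(\A \times \B)^+$, the expansion of $\A \times \B$ by all its elements as constants, so $\Pol_k(\A \times \B) = \Clo_k((\A \times \B)^+)$. Setting $M := (A \times B)^k$, the clone $\Pol_k(\A \times \B)$ is the subalgebra of $((\A \times \B)^+)^M = (\A^+)^M \times (\B^+)^M$ generated by the $k$ coordinate projections, and it contains all constant functions automatically from the adjoined constants. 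If every subalgebra of $(\A^+)^M \times (\B^+)^M$ is a product subalgebra, then $\Pol_k(\A \times \B) = \pi_{A^M}(\Pol_k(\A \times \B)) \times \pi_{B^M}(\Pol_k(\A \times \B))$. Inspecting the generators, $\pi_{A^M}(\Pol_k(\A \times \B))$ is generated in $(\A^+)^M$ by the $k$ functions $(\vb{a},\vb{b}) \mapsto a_i$ together with the $A$-constants, so it equals $\{(\vb{a},\vb{b}) \mapsto f(\vb{a}) : f \in \Pol_k(\A)\}$; symmetrically for $B$. This yields $\Pol_k(\A \times \B) = \psi(\Pol_k(\A) \times \Pol_k(\B))$.

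It thus suffices to apply Theorem~\ref{thm:p} to $\A^+$ and $\B^+$ with edge-term parameter $3$. The $3$-edge term of $\A \times \B$ is inherited by the expansion. Hypothesis~\eqref{it:small}, requiring subalgebras of $(\A^+)^r \times (\B^+)^s$ with $r+s \le 2$ to be product subalgebras, holds vacuously: every element of $A$ (resp.\ $B$) is a constant of $\A^+$ (resp.\ $\B^+$), so the only subalgebra of $\A^+ \times \B^+$ is $A \times B$ itself, and the cases $r+s \le 1$ are trivial. Hypothesis~\eqref{it:tol} becomes, since $\A^+$ and $\B^+$ each have a unique subalgebra, the statement that every tolerance of $(\A \times \B)^+$ is a product tolerance; but reflexivity is automatic from the adjoined constants, so tolerances of $(\A \times \B)^+$ coincide with those of $\A \times \B$, and product tolerances match up in the two signatures --- this is exactly the assumption of Theorem~\ref{thm:polies}. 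The main obstacle is precisely this bookkeeping: confirming that adjoining constants makes the subalgebra hypothesis of Theorem~\ref{thm:p} vacuous while preserving the tolerance hypothesis, so that Theorem~\ref{thm:p} applies under the genuinely weaker assumptions given here (compared to Theorem~\ref{thm:indedge}).
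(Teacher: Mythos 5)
Your proposal is correct and follows essentially the same route as the paper: both adjoin all elements as constants so that $\Pol_k$ becomes $\Clo_k$ of the expansions, observe that this makes the subalgebra hypothesis of Theorem~\ref{thm:p} trivial while the tolerance hypothesis is exactly the stated assumption, and then apply Theorem~\ref{thm:p}. The only cosmetic difference is that the paper converts the conclusion into the bijectivity of $\psi$ by citing Lemma~\ref{lem:indep1} and Lemma~\ref{lem:terms}, whereas you inline that step by applying the product-subalgebra property directly to $\Pol_k(\A\times\B)$ viewed inside $(\A^+)^M\times(\B^+)^M$ with $M=(A\times B)^k$.
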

    \emph{Proof:} 
          For each $a \in A, b \in B$, we add a constant operation $c_{(a,b)}$
           to our language.
          By $\ab{A}^*$, we denote the expansion of $\A$ satisfying
           $c_{(a,b)}^\A () = a$, and we write  $\B^*$ for the expansion of
          $\B$ with $c^\B_{(a,b)} () = b$ for all $a \in A, b \in B$.
          It is easy to see that $\Clo_k (\A^*) = \Pol_k (\A)$,
          $\Clo_k (\B^*) = \Pol_k (\B)$, and $\Clo_k (\A^* \times \B^*) =
           \Pol_k (\A \times \B)$. By its construction, $\A^* \times \B^*$ has 
          no proper subuniverses. Since $\A \times \B$ and
         $\A^* \times \B^*$ have the same tolerances, every tolerance
          of $\A^* \times \B^*$ is a product tolerance. 
           We apply Theorem~\ref{thm:p} and obtain
           that $\A^*$ and $\B^*$ are independent.
           Now the result follows from Lemma~\ref{lem:terms}. \qed 
                
    As a corollary, we obtain Corollary~2 and Theorem~3 of \cite{KM:PFOS}.
    \begin{cor}[\cite{KM:PFOS}]
         Let $\ab{A}$ and  $\ab{B}$ be algebras in the variety $V$, and let $k \in \N$.
         If either
         \begin{enumerate}
            \item $V$ has a majority term, or
            \item $V$ is congruence permutable, and every congruence of $\ab{A} \times \ab{B}$
                  is a product congruence,
          \end{enumerate}
         then for all polynomial functions $f \in \Pol_k (\A)$ and 
         $g \in \Pol_k (\B)$, there is a polynomial function $h \in \Pol_k (\A \times \B)$
         with $h ( (a_1, b_1), \ldots, (a_k, b_k)) = (f (\vb{a}), g (\vb{b}))$ for  
         all $\vb{a} \in A^k$ and $\vb{b} \in B^k$.
   \end{cor}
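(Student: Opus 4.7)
The plan is to verify the hypotheses of Theorem~\ref{thm:polies} in each of the two cases and then invoke that theorem. Since Theorem~\ref{thm:polies} is stated for finite algebras, we tacitly assume $\A$ and $\B$ are finite throughout the argument.

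First I would exhibit a $3$-edge term in each case. If $p(x,y,z)$ is a Mal'cev term, then $t(w_1,w_2,w_3,w_4) := p(w_2,w_1,w_3)$ is a $3$-edge term, as one checks directly: $t(x,x,y,y) = p(x,x,y) = y$, $t(x,y,x,y) = p(y,x,x) = y$, and $t(y,y,y,x) = p(y,y,y) = y$. If $m(x,y,z)$ is a majority term, then $t(w_1,w_2,w_3,w_4) := m(w_2,w_3,w_4)$ is a $3$-edge term by an analogous verification. Hence in both cases $V$ has a $3$-edge term, matching the first hypothesis of Theorem~\ref{thm:polies}.

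Next I would check that every tolerance of $\A\times\B$ is a product tolerance. In the Mal'cev case this is immediate: by the classical theorem of Werner, every tolerance of an algebra in a congruence permutable variety is a congruence, so every tolerance of $\A\times\B$ is a congruence, which by hypothesis is a product congruence $\alpha\times_c\beta$; since $\alpha$ and $\beta$ are in particular tolerances, this is also a product tolerance. In the majority case, let $\gamma$ be a tolerance of $\A\times\B$. I would reindex coordinates to view $\gamma$ as a subalgebra of $\A\times\A\times\B\times\B$, set $\alpha := \pi_{\{1,2\}}(\gamma)$ and $\beta := \pi_{\{3,4\}}(\gamma)$ (which are tolerances of $\A$ and $\B$ respectively), and show $\gamma = \alpha\times_c\beta$ by the Baker--Pixley theorem. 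Both subalgebras of the $4$-fold product agree on the projections onto $\{1,2\}$ and $\{3,4\}$ by construction; on each of the remaining four pairs of coordinates both projections equal all of $A\times B$ (for $\gamma$ by reflexivity, and for $\alpha\times_c\beta$ using reflexivity of $\alpha$ and $\beta$). Since majority is a $3$-ary near-unanimity term, Baker--Pixley tells us that a subalgebra of a finite product is determined by its pair projections, so the two subalgebras coincide.

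Having verified both hypotheses in each case, the corollary then follows directly from Theorem~\ref{thm:polies}. I expect the main obstacle to be the Baker--Pixley step in the majority case, since it is the only place where an external theorem is used nontrivially and requires a case analysis on pair projections; everything else (the $3$-edge term constructions, and the Mal'cev reduction via Werner's theorem) is a routine verification.
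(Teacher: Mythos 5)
Your proposal is correct and follows the same overall skeleton as the paper's proof: exhibit a $3$-edge term in each case (your constructions $p(w_2,w_1,w_3)$ and $m(w_2,w_3,w_4)$ are exactly the ones the paper uses), verify that every tolerance of $\A\times\B$ is a product tolerance, and then invoke Theorem~\ref{thm:polies}. The Mal'cev case is handled identically (tolerances are congruences in a congruence permutable variety, hence product congruences by hypothesis). The one genuine divergence is the majority case: the paper proves that a tolerance $\epsi$ is a product tolerance by a self-contained explicit computation, applying a nested majority term to elements of $\epsi$ and to diagonal pairs to produce $((a_1,b_3),(a_2,b_4))\in\epsi$ directly; you instead reindex $\gamma\le(\A\times\B)^2$ as a subalgebra of $\A\times\A\times\B\times\B$ and appeal to the Baker--Pixley decomposition of subalgebras of finite products by their pair projections. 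Your route is sound (the pair projections of $\gamma$ and of $\alpha\times_c\beta$ do all coincide, the four ``mixed'' ones being all of $A\times B$ by reflexivity, and the Baker--Pixley subalgebra lemma needs only finitely many factors, not finiteness of the algebras), and it is arguably more conceptual, at the cost of importing an external classical theorem where the paper stays elementary and term-level. Your explicit remark that $\A$ and $\B$ must be taken finite in order to apply Theorem~\ref{thm:polies} is a fair point; the paper's own proof has the same implicit restriction.
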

   \emph{Proof:}
      Let us first assume that $V$ has a majority term $m (x,y,z)$. 
      Then the term $e(x_1, x_2, x_3, x_4) := m (x_2, x_3, x_4)$
      is a $3$-edge term. All tolerances of $\A \times \B$
      are product tolerances. To see this, let $\epsi$ be a tolerance
      of $\A \times \B$, and let $( (a_1,b_1), (a_2, b_2)) \in \epsi$ and 
      $( (a_3, b_3), (a_4, b_4) ) \in \epsi$.
      Writing $m$ for the induced term operation $m^{(\A\times\B)\times(\A\times\B)}$ we obtain
      \begin{multline*}
        m \bigl(  m ( ((a_1,b_1), (a_2,b_2)), ((a_1, b_3), (a_1, b_3)), ((a_2, b_3), (a_2, b_3)) ), \\
           m ( ((a_1,b_1), (a_2,b_2)), ((a_1, b_4), (a_1, b_4)), ((a_2, b_4), (a_2, b_4)) ), \\
           m ( ((a_2,b_3), (a_2,b_3)), ((a_2, b_4), (a_2, b_4)), ((a_3, b_3), (a_4, b_4)) )\bigr) \\ = ((a_1, b_3), (a_2, b_4)) \in
        \epsi.
      \end{multline*}  
 Thus $\epsi$ is a product tolerance.
      Now Theorem~\ref{thm:polies} yields the required polynomial $h$.
      In the case that $V$ has a Mal'cev term $d$, then $e (x_1, x_2, x_3, x_4) := d (x_2, x_1, x_3)$
      is a $3$-edge term, and all tolerances of algebras in $V$ are congruences.
      Hence all tolerances of $\A \times \B$ are product tolerances.
      The result follows from Theorem~\ref{thm:polies}.
 \qed

\section{Algorithms}

 Let $\A,\B$ be finite algebras of fixed finite type. For $i\in\{1,2\}$ let
 $e_i\colon A^2\to A, (x_1,x_2)\mapsto x_i,$ and $f_i\colon B^2\to B, (x_1,x_2)\mapsto x_i,$ denote the $i$-th
 projection on $A^2$ and $B^2$, respectively. Then $e_i\in A^{A^2}$ and $f_i\in B^{B^2}$.
 From the definition, $\A$ and $\B$ are independent iff there exists a binary term operation
 on $\A\times\B$ that is $e_1$ on the factor $A$ and $f_2$ on the factor $B$.
 Equivalently, $\A$ and $\B$ are independent iff
\[ (e_1,f_2) \text{ lies in the subalgebra of } \A^{A^2} \times\B^{B^2} \text{ that is generated by } (e_1,f_1),(e_2,f_2). \]
 Using a straightforward closure algorithm that enumerates all elements of the generated algebra,
 the last condition can be checked in time exponential in $\max(|A|,|B|)$.
 Hence deciding independence of two arbitrary finite algebras is in EXPTIME.
 The results from Section~\ref{sec:independent} yield easy polynomial time algorithms for
 algebras with a Mal'cev term or more generally an edge term.

\begin{thm} \label{thm:algedge}
 Let $k\geq 2$, and let $\ab{A}$, $\ab{B}$ be finite algebras in a variety of finite type
 with $k$-edge term.
 Then the independence of $\ab{A}$ and $\ab{B}$ can be decided in time polynomial in $\max(|A|,|B|)$.
\end{thm}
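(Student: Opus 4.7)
The plan is to turn the equivalence of Theorem~\ref{thm:indedge} into a straightforward enumeration algorithm. Set $N := \max(4, k-1)$; since $k$ and the type are fixed, $N$ together with the number $\ell$ of basic operations and the maximum arity $d$ of a basic operation are constants of the input. By the equivalence \eqref{it:sc0a}$\Leftrightarrow$\eqref{it:sc5} of Theorem~\ref{thm:indedge}, $\ab{A}$ and $\ab{B}$ are independent if and only if every subalgebra of $\ab{A}^r \times \ab{B}^s$ is a product subalgebra for each $(r,s) \in \N_0^2$ with $r+s \le N$.

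Next I would reformulate the condition ``every subalgebra is a product subalgebra'' as a two-generator test. By definition, $\ab{C} \leq \ab{A}^r \times \ab{B}^s$ is a product subalgebra iff whenever $(\vb{a}, \vb{b}), (\vb{c}, \vb{d}) \in C$ we have $(\vb{a}, \vb{d}) \in C$. Hence every subalgebra of $\ab{A}^r \times \ab{B}^s$ is a product subalgebra if and only if for every choice of $x := (\vb{a}, \vb{b})$ and $y := (\vb{c}, \vb{d})$ in $A^r \times B^s$, the subalgebra of $\ab{A}^r \times \ab{B}^s$ generated by $\{x, y\}$ contains the mixed tuple $(\vb{a}, \vb{d})$. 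This observation is what makes the condition algorithmically testable.

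The algorithm then iterates over the $O(N^2)$ pairs $(r, s)$ with $r + s \le N$ and, for each, over the at most $n^{2N}$ pairs $(x, y) \in (A^r \times B^s)^2$, where $n := \max(|A|,|B|)$. For each such pair it computes the subalgebra generated by $\{x, y\}$ using the standard saturation procedure: start from $\{x, y\}$ and repeatedly apply each basic operation to tuples of already-generated elements until no new element is produced. It then checks whether $(\vb{a}, \vb{d})$ has been generated. The output is ``independent'' iff every such check succeeds; otherwise the failing pair $(x, y)$ is a certificate of non-independence.

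Each saturation produces at most $n^N$ elements, and each step evaluates $\ell$ operations on at most $n^{Nd}$ input tuples, so the saturation for one pair runs in time $n^{O(Nd)}$. Multiplying by the $O(N^2) \cdot n^{2N}$ outer iterations still yields total runtime $n^{O(Nd)}$, which is polynomial in $n$ because $N$, $\ell$ and $d$ are constants. There is no genuine obstacle here: the algorithmic work is routine closure computation, and the only substantive ingredient is the bounded-arity reduction provided by Theorem~\ref{thm:indedge}.
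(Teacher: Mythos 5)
Your proposal is correct and follows essentially the same route as the paper: both reduce independence via Theorem~\ref{thm:indedge}\eqref{it:sc5} to the two-generator membership test $(\vb{a},\vb{d})\in\langle(\vb{a},\vb{b}),(\vb{c},\vb{d})\rangle$ over all $r+s\le\max(4,k-1)$ and all pairs of tuples, solved by a standard closure algorithm with the same polynomial-time accounting.
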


\begin{proof}
 Let $m := \max(4,k-1)$ and $n := \max(|A|,|B|)$. 
 By Theorem~\ref{thm:indedge}~\eqref{it:sc5} the algebras $\ab{A}$ and $\ab{B}$ are independent iff
 for all $r,s\in\N$ with $r+s\leq m$ and for all $a,c\in A^r, b,d\in B^s$
\[ (a,d)\in\langle (a,b),(c,d)\rangle. \]
 To verify the latter condition we need to solve at most $(m-1)n^{2m}$ membership problems 
 for algebras of size at most $n^m$. Each of these membership problems can be decided
 using a closure algorithm in time polynomial in $n$ with the actual degree of
 the polynomial depending on $m$ and the arities of the basic operations of the algebras. 
\end{proof}

\section{Acknowledgments}
 The authors thank K.\ Kaarli, O.\ Koshik, J.\ Opr\v{s}al, and M.\ Volkov for helpful discussions
 on this topic.

\bibliographystyle{plain}
%\bibliography{dp11bib}
\def\cprime{$'$}

\begin{flushleft}
\begin{small}
Institute for Algebra, Johannes Kepler University Linz,
Austria \\
{\tt erhard@algebra.uni-linz.ac.at}\\
{\tt peter.mayr@jku.at}
\end{small}
\end{flushleft}

\end{document}